\DeclareMathOperator{\sigmap}{\sigma_{\textrm{ap}}}
\DeclareMathOperator{\hol}{Hol}
\numberwithin{equation}{section}
\newtheorem{theorem}{Theorem}[section]
\newtheorem*{theorem*}{Theorem}
\newtheorem*{prob*}{Problem}
\newtheorem{lemma}[theorem]{Lemma}
\theoremstyle{remark}
\newcommand*{\bscr}{\mathscr{B}}
\newcommand*{\hh}{\mathcal{H}}
\newcommand*{\natu}{\mathbb{N}}
\newcommand*{\mcal}{\mathcal{M}}
\newcommand*{\nul}{\mathscr{N}}
\newcommand*{\comp}{\mathbb{C}}
\newcommand*{\bou}{\boldsymbol{B}}
\newcommand{\disc}{\mathbb{D}}
\theoremstyle{definition}
\newcommand{\lein}{\mathcal{L}}
\newcommand{\mul}{\mathcal{M}_z}
\newcommand{\bspace}{\mathcal{B}}
\newcommand*{\ran}{\mathscr{R}}
\newcommand*{\zbb}{\mathbb{Z}}
\newcommand{\kol}{\mathbb{T}}
\begin{document}

   \title[Cowen-Douglas operators and analytic Continuation] {Cowen-Douglas operators and analytic continuation}
\author[P. Pietrzycki ]{Pawe{\l} Pietrzycki}
   % \author[P. Pietrzycki and J. Stochel]{Pawe{\l} Pietrzycki and  Jan Stochel}

   \subjclass[2020]{Primary 47A10, 47B38, 47B48;
   Secondary 46E22}

   \keywords{Cowen-Douglas operators, left-invertible operator, multiplication operator, analytic continuation, spectrum, invariant subspace}

   \address{Wydzia{\l} Matematyki i Informatyki, Uniwersytet
Jagiello\'{n}ski, ul. {\L}ojasiewicza 6,
PL-30348 Krak\'{o}w}

   \email{pawel.pietrzycki@im.uj.edu.pl}

   \begin{abstract}
 In this paper, we study certain Banach spaces of analytic functions on which a left-invertible multiplication operator acts. It turns out that, under natural conditions, its left inverse is a Cowen-Douglas operator. We investigate how the analytic continuations of functions from an invariant subspace of this Cowen-Douglas operator relate to the spectrum of its restriction to that subspace.
   \end{abstract}

   \maketitle

   \section{Introduction}
 Let $\bspace$ be a Banach space. Denote by $\bou(\bspace)$ the Banach algebra of all bounded linear
 operators on $\bspace$.
%  Denote by $\bou$
% the $C^*$-algebra of all bounded linear
% operators on a complex Hilbert space $\hh$. 
Given $T\in\bou(\bspace)$, we write $\ran(T)$ and $\nul(T)$ for the range  and the kernel of $T$, respectively. Here $\bigvee$ stands for the (closed) linear span. By $\mathbb{C}$ we denote the
field of complex
numbers. The
symbols $\mathbb{Z}$ and
% $\mathbb{Z}_{+}$
% and
$\mathbb{N}$
 are reserved for the sets of integers and
positive integers
% and nonnegative integers, 
respectively.

In \cite{ARR98},
% (cf.~\cite{alemana})
 A. Aleman, S. Richter, and W. T. Ross studied a Banach space $\bspace$ of analytic functions on $\disc$ which has the following properties:
\begin{align*}
    &\mul \bspace \subset \bspace,\\
    &\bspace \hookrightarrow \hol(\disc), \text{ the inclusion is  
continuous,}\\
& 1\in \bspace,\\
& \lein_\lambda \bspace \subset \bspace,
\\
&\sigma(\mul) = \overline{\disc}\label{warost}.
\end{align*}

where for $\lambda\in \disc$, the operator $\lein_\lambda\colon \bspace \to \bspace$ is given by
\begin{equation*}
    (\lein_\lambda f)(z) = \frac{f(z)-f(\lambda)}{z-\lambda}, \qquad z \in \disc.
\end{equation*}
This example include the classical Banach spaces of holomorphic functions in the unit disc: the Hardy space, the Bergman space and the 
Dirichlet space and the Besov space. They  related meromorphic continuations of the functions from  an $\lein$-invariant subspace $\mcal$ of $\bspace$ to the spectrum of $\lein|_\mcal$. 
In this general setting, they proved  that $\sigma(\lein)=\overline{\disc}$, 
$\sigma(\lein|_\mcal)\subset \overline{\disc}$, for $\mcal$ invariant subspace of $\lein$ and
    \begin{align*}
    \sigma_{\text{ap}}(\lein|_\mcal)\cap \disc = \sigma_{\text{p}}(\lein|_\mcal)\cap \disc  =\{a\in \disc \colon \frac{1}{1-az}\in \mcal \}.
\end{align*}
Moreover, under a regularity condition
on $\bspace$, they proved that
% For example, for operator $\lein$ under some additional assumption and its invariant subspece $\mcal$, 
\begin{align*}
\sigma_{\text{ap}}(\lein|_\mcal)\cap\kol=\kol\setminus\{\frac{1}{\zeta}\colon \text{every } f\in\mcal &\text{ extends to be analytic} \\& \text{ in a neighborhood of } \zeta \}.
\end{align*}

For a connected open subset $\varOmega$ of $\comp$ and $n\in \natu$, the Cowen-Douglas class $\bscr_n(\varOmega)$
consists of the operators $T\in\bou(\bspace)$
which satisfy:
\begin{itemize}
    \item[(i)] $\varOmega\subseteq\sigma(T)=\{\omega\in\comp\colon T-\omega \text{ not invertible}\},$
    \item[(ii)] $\ran(T-\omega)=\bspace$ for $\omega\in\varOmega$,
    \item[(iii)] $\bigvee_{\omega\in\varOmega}\ker(T-\omega)=\bspace$,
    \item[(iv)] $\dim \ker (T-\omega)=n$ for $\omega\in\varOmega$.
\end{itemize}
Cowen-Douglas operators have played an important role in operator theory, servicing as a bridge
between operator theory and complex geometry. 
Namely, for an operator $T\in\bscr_n(\varOmega)$ the mapping $\omega\to(T-\omega)$
gives rise to a Hermitian holomorphic vector bundle over $\varOmega$. Let us recall this notion. Let $\Lambda$ be a manifold with a complex structure and $n$ be a
positive integer. A rank \textit{$n$ holomorphic vector bundle} over $\Lambda$ consists of a manifold $E$ with
a complex structure together
with a holomorphic map $\pi$ from $E$ onto $\Lambda$ such that each
fibre $E_\lambda:=\pi^{-1}(\lambda)$  is isomorphic to $\comp^n$ and such that for each $\lambda_0$ in $\Lambda$ there exists a neighborhood $U$ of $\lambda_0$ and holomorphic functions $\gamma_1$,$\gamma_2$,...,$\gamma_n$ from $U$ to $E$ whose values form
a basis for $E_\lambda$ at each $\lambda\in\Lambda$. For an operator $T\in\bscr_n(\varOmega)$ let $(E_T,\pi)$ denote the subbundle of the trivial bundle $\varOmega\times\hh$ defined by
\begin{align*}
E_T=\{(w,x)\in\varOmega\times\bspace\colon x\in \nul(T-\omega)\} \text{ and } \pi(\omega,x)=\omega.
\end{align*}

\section{Main results}
  Let $\bspace$ be a Banach space of $E$-valued analytic functions defined on domain $\varOmega\subset \comp$. Let $\mul\in \bou(\bspace)$ be  a left-invertible multiplication operator  with a left-inverse $\lein\in \bou(\bspace)$, where $E$ is a Hilbert space. We assume that $ \mul$ satisfies the
following five properties:
\begin{itemize}
    \item $\bspace \hookrightarrow \hol(\varOmega, E)$ the inclusion map is 
continuous,
\item $\dim \nul(\lein)=n$,
\item every $f\in \nul(\lein)$ is analytic in an open neighborhood of $\bar\varOmega$,
\item for every $f\in \bspace$ and every $\lambda\in \varOmega$,
% , where $\overline{\Omega}=\sigma(\mul)$
there exist $g\in \bspace$ and $h\in \nul(\lein)$
\begin{align*}
    f=(\mul-\lambda)g+h,
\end{align*}
\item $\sigma(\mul)=\bar{\varOmega}$.
\end{itemize}
This class includes the class described in Introduction, as well as certain adjoints of Cauchy dual operators for left-invertible operators (see the Shimorin analytic model \cite{S01}  and its extensions in \cite{ ja3}).

 We prove that the inverse $R_\lambda$ of $(I-\lambda\lein)$ has the following property: for every $f\in \mcal$ there exists $c_\lambda(f)\in \nul(\lein)$ such that
    \begin{align}\label{kluuu}
         (\mul-\lambda)Rf=\mul f-\lambda c_\lambda(f),
    \end{align}

% $(I-\lambda\lein)|_\mcal)^{-1}f(0)=c_\lambda(f)$
%  and so $f$ has an analytic continuation to a neighborhood of $\xi$.
 
Preliminary research show that this property is equivalent to the following one, expressed in terms of spectrum
\begin{align*}
    \lambda\in \sigma(\mul)\implies \frac{1}{\lambda}\notin \sigma(\lein).
\end{align*}

Observe that if we substitute $z=\lambda$ in \eqref{kluuu}, where $\lambda\in\varOmega$, $\lambda\neq0$ then we get $f(\lambda)=c_\lambda(f)(\lambda)$.
This makes it possible to define the following function
\begin{align}\label{przed}
\tilde{f} \colon D\ni\lambda \to c_\lambda(f)(\lambda)\in E,
\end{align}
where $D = \{ \lambda \in \comp \colon \frac{1}{\lambda} \notin \sigma(\lein|_\mcal)\}$. As before $\tilde{f}(\lambda)=f(\lambda)$, for $\lambda\in \varOmega$.
\begin{theorem}\label{iscd}
    Suppose that the sequence $\{\mul^n\lein^n\}$ converges to $0$ in the strong operator topology. Then $\lein$ is a Cowen-Douglas $\bscr_n(\varOmega^\prime)$, where $\varOmega^\prime=\{ \frac{1}{\lambda}\colon \lambda \notin \sigma(\mul)\}$.
\end{theorem}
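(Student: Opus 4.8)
The plan is to verify directly the four defining conditions (i)--(iv) of the Cowen--Douglas class for $\lein$ on $\varOmega^\prime$, leaning on two elementary consequences of the left-invertibility $\lein\mul=I$. First, for $\omega=\frac1\lambda\in\varOmega^\prime$ one has $\lambda\notin\sigma(\mul)$, so $I-\omega\mul=-\omega(\mul-\lambda)$ is \emph{invertible}; moreover $n\ge1$ forces $\mul$ to be non-invertible (any left inverse of an invertible operator is its inverse, which would give $\nul(\lein)=0$), hence $0\in\sigma(\mul)=\bar\varOmega$, so the inversion $\lambda\mapsto\frac1\lambda$ is well defined on $\comp\setminus\bar\varOmega$ and $\varOmega^\prime$ is open (I would also record the openness and address connectedness, passing if necessary to the component corresponding to the unbounded component of $\comp\setminus\bar\varOmega$). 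Second, $\lein\mul=I$ gives the identity $(I-\lambda\lein)\mul=\mul-\lambda I$.

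For (iv) and (i) I would characterise the eigenspaces. Fix $\omega\in\varOmega^\prime$. If $k\in\nul(\lein)$ and $h=(I-\omega\mul)^{-1}k$, then applying $\lein$ to $(I-\omega\mul)h=k$ and using $\lein\mul=I$, $\lein k=0$ yields $\lein h=\omega h$; conversely, if $\lein f=\omega f$ then $k:=(I-\omega\mul)f$ satisfies $\lein k=\omega f-\omega f=0$, so $k\in\nul(\lein)$ and $f=(I-\omega\mul)^{-1}k$. Hence $\nul(\lein-\omega)=(I-\omega\mul)^{-1}\nul(\lein)$, which has dimension $n$ since $(I-\omega\mul)^{-1}$ is invertible, giving (iv); as $n\ge1$, $\lein-\omega$ fails to be injective, hence is not invertible, so $\omega\in\sigma(\lein)$, giving (i). For (ii) I would use the second identity: since $\lambda\notin\sigma(\mul)$, $\mul-\lambda I$ is surjective, and $\ran(\lein-\omega)=\ran(I-\lambda\lein)\supseteq\ran\big((I-\lambda\lein)\mul\big)=\ran(\mul-\lambda I)=\bspace$, so $\lein-\omega$ is onto.

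The remaining and most substantial point is the spanning condition (iii), and this is where the hypothesis $\mul^n\lein^n\to0$ (\textsc{sot}) enters. I would first expand an arbitrary $f\in\bspace$: iterating $f=\mul\lein f+(I-\mul\lein)f$ and using $\lein\mul=I$ gives, by induction, $f=\mul^N\lein^Nf+\sum_{j=0}^{N-1}\mul^jk_j$ with $k_j=(I-\mul\lein)\lein^jf\in\nul(\lein)$; letting $N\to\infty$ and invoking the \textsc{sot} hypothesis yields $f=\sum_{j\ge0}\mul^jk_j$, whence $\bigvee_{j\ge0}\mul^j\nul(\lein)=\bspace$. Next, for $|\lambda|$ large the resolvent expansion gives $\lambda(\lambda-\mul)^{-1}k=\sum_{j\ge0}\lambda^{-j}\mul^jk$ for $k\in\nul(\lein)$; differentiating the $\bspace$-valued analytic map $\omega\mapsto(I-\omega\mul)^{-1}k$ and reading off its coefficients shows that $\bigvee_{\omega\in\varOmega^\prime}\nul(\lein-\omega)$ contains every $\mul^jk$, hence all of $\bspace$ by the previous step. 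Equivalently, one argues by annihilators: a functional $x^\ast$ killing all eigenspaces makes $\omega\mapsto\langle(I-\omega\mul)^{-1}k,x^\ast\rangle$ vanish on $\varOmega^\prime$, so all its Taylor coefficients $\langle\mul^jk,x^\ast\rangle$ vanish and $x^\ast=0$.

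I expect the main obstacle to be condition (iii): one must turn the \textsc{sot}-convergence of $\mul^n\lein^n$ into genuine convergence of the series $\sum_{j}\mul^jk_j$ for each fixed $f$ (which the iteration delivers) and then justify passing, inside the closed linear span, from the analytic family $\{(I-\omega\mul)^{-1}k\}_{\omega\in\varOmega^\prime}$ to its coefficients $\mul^jk$, taking care over the region of convergence of the resolvent expansion and the connectedness of $\varOmega^\prime$. By contrast, conditions (i), (ii), and (iv) are purely algebraic consequences of $\lein\mul=I$ and the invertibility of $\mul-\lambda$ off the spectrum, and require neither the \textsc{sot} hypothesis nor the series expansion.
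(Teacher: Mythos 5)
Your proposal is correct, and its skeleton coincides with the paper's: both verify the four Cowen--Douglas conditions from the same two ingredients, namely that the eigenvectors of $\lein$ at $\omega=1/\lambda$ are the resolvent vectors $(I-\omega\mul)^{-1}k=-\lambda(\mul-\lambda)^{-1}k$ with $k\in\nul(\lein)$, and that the \textsc{sot} hypothesis turns the telescoping identity $f=\mul^N\lein^Nf+\sum_{j=0}^{N-1}\mul^jk_j$ into the density of $\bigvee_j\mul^j\nul(\lein)$. The differences are in execution, and they mostly favor your version. For (i) and (iv), the paper appeals to Theorem~\ref{dopel} to get $\varOmega'\subset\sigma(\lein)$ and then cites \cite{CD78} for the fact that (i) and (ii) force $\dim\ker(\lein-\omega)$ to be \emph{constant}; constancy alone does not identify the value as $n$, and that fact also requires $\varOmega'$ to be connected. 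Your two-sided identification $\nul(\lein-\omega)=(I-\omega\mul)^{-1}\nul(\lein)$ yields (i) and (iv) simultaneously, with the exact dimension $n$, no citation, and no connectedness hypothesis; it is the more complete argument. For (ii) the two proofs are the same algebra; incidentally, the paper's intermediate claim that left-multiplying $(\mul-\lambda)R_\lambda=I$ by $\lein$ gives $(I-\lambda\lein)R_\lambda=I$ is a slip (the right-hand side is $\lein$, not $I$), which your route via $(I-\lambda\lein)\mul=\mul-\lambda$ avoids while reaching the same conclusion. For (iii), your telescoping step is exactly the second part of Lemma~\ref{rpps}; the paper then proves $\mathcal{P}=\mathcal{S}$ by Neumann-series estimates plus an induction on the vectors $\mul^{n+1}h$, whereas you pass from the polynomial span to the eigenvector span by a Hahn--Banach/annihilator argument reading off the Taylor coefficients of $\omega\mapsto(I-\omega\mul)^{-1}k$ at $0$ --- shorter, and it uses only eigenvalues $\omega$ in a small punctured disc about the origin, which automatically lies in $\varOmega'$. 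Finally, you are right to flag openness and connectedness of $\varOmega'$: the paper is silent on this, although $\comp\setminus\bar\varOmega$ can be disconnected (for instance for the annulus models the paper itself invokes); passing to the component of $\varOmega'$ coming from the unbounded component of $\comp\setminus\sigma(\mul)$ is the natural repair, and your density argument for (iii) takes place entirely inside that component, so nothing is lost.
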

\begin{theorem}\label{dopel}
Let $\mcal$ be an invariant subspace of $\lein$. Then the following conditions hold{\em :}
    \begin{align*}
      \sigmap&(\lein|_\mcal)\cap \{ \frac{1}{\lambda}\colon \lambda \notin \sigma(\mul)\} =\sigma_p(\lein|_\mcal)\cap \{ \frac{1}{\lambda}\colon \lambda \notin \sigma(\mul)\}\\&=\{\frac{1}{\lambda}\colon \text{there exist $h$ such that }(\mul-\lambda)^{-1}h\in \mcal \}
    \end{align*}
\end{theorem}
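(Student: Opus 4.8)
The plan is to prove the two displayed equalities separately, writing $\mu=\tfrac1\lambda$ for a typical point of $\varOmega^\prime=\{\tfrac1\lambda\colon\lambda\notin\sigma(\mul)\}$; for such $\mu$ the operator $\mul-\lambda$ is invertible, and this is the feature I exploit throughout. Since $\sigma_p\subseteq\sigmap$ always holds, the theorem reduces to two claims: first, the point-spectrum characterization $\sigma_p(\lein|_\mcal)\cap\varOmega^\prime=\{\tfrac1\lambda\colon \exists\,0\neq h\in\nul(\lein),\ (\mul-\lambda)^{-1}h\in\mcal\}$; and second, the inclusion $\sigmap(\lein|_\mcal)\cap\varOmega^\prime\subseteq\sigma_p(\lein|_\mcal)\cap\varOmega^\prime$, i.e. every approximate eigenvalue of $\lein|_\mcal$ lying in $\varOmega^\prime$ is a genuine eigenvalue.

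For the point-spectrum equality I would argue purely from the left-inverse identity $\lein\mul=I$. Given $0\neq h\in\nul(\lein)$ with $g:=(\mul-\lambda)^{-1}h\in\mcal$, one has $\mul g=\lambda g+h$; applying $\lein$ and using $\lein h=0$ gives $g=\lambda\lein g$, hence $\lein g=\mu g$ with $g\neq0$, so $\mu\in\sigma_p(\lein|_\mcal)$. Conversely, if $0\neq g\in\mcal$ satisfies $\lein g=\mu g$, then $h:=(\mul-\lambda)g$ satisfies $\lein h=g-\lambda\lein g=g-g=0$, so $h\in\nul(\lein)$; moreover $h\neq0$ because $\mul-\lambda$ is injective, and $(\mul-\lambda)^{-1}h=g\in\mcal$. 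This settles the second equality.

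The substantive part is the reduction of approximate eigenvalues to eigenvalues. Suppose $\mu\in\sigmap(\lein|_\mcal)\cap\varOmega^\prime$ and choose $g_k\in\mcal$ with $\|g_k\|=1$ and $(\lein-\mu)g_k\to0$. Put $h_k:=(\mul-\lambda)g_k$. Then $\lein h_k=g_k-\lambda\lein g_k=-\lambda(\lein-\mu)g_k\to0$, while $\|h_k\|$ stays bounded above and below since $\mul-\lambda$ is invertible. Now I use that $\lein$ is surjective (being a left inverse, $\lein\mul=I$ forces $\ran(\lein)=\bspace$) together with $\dim\nul(\lein)=n<\infty$: write $\bspace=\nul(\lein)\oplus W$ with $W$ closed, so that $\lein|_W\colon W\to\bspace$ is a continuous bijection, hence bounded below by the open mapping theorem. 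Splitting $h_k=p_k+w_k$ with $p_k\in\nul(\lein)$, $w_k\in W$, we get $\lein w_k=\lein h_k\to0$, whence $w_k\to0$; consequently $p_k=h_k-w_k$ is bounded and bounded away from $0$. As $\nul(\lein)$ is finite dimensional, a subsequence of $p_k$ converges to some $p\in\nul(\lein)$ with $p\neq0$, so $h_k\to p$ and $g_k=(\mul-\lambda)^{-1}h_k\to(\mul-\lambda)^{-1}p=:g$ along this subsequence. Because $\mcal$ is closed, $g\in\mcal$; because $\|g_k\|=1$, $g\neq0$; and $\lein g=\lim\lein g_k=\mu g$. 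Thus $\mu\in\sigma_p(\lein|_\mcal)$, which completes the argument.

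I expect this third step to be the main obstacle: manufacturing an honest eigenvector from an approximate eigensequence needs a compactness input, and the two structural facts that make it go through are the finite dimensionality of $\nul(\lein)$ and the surjectivity of $\lein$, the latter upgrading the algebraic splitting $\bspace=\nul(\lein)\oplus W$ into a bounded-below estimate on $W$. By contrast, the point-spectrum equality is a formal consequence of $\lein\mul=I$ and the invertibility of $\mul-\lambda$ on $\varOmega^\prime$.
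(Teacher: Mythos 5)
Your proof is correct, and it follows the same overall strategy as the paper's: both halves hinge on the invertibility of $\mul-\lambda$, the identity $\lein\mul=I$, and a Bolzano--Weierstrass extraction in the finite-dimensional space $\nul(\lein)$ to upgrade an approximate eigensequence to a genuine eigenvector. The one place you genuinely diverge is in how the vector $h_k=(\mul-\lambda)g_k$ gets split into a kernel part plus a vanishing part. You manufacture the splitting abstractly: choose a closed complement $W$ of $\nul(\lein)$, observe that $\lein$ is surjective (from $\lein\mul=I$), and invoke the open mapping theorem to conclude that $\lein|_W$ is bounded below, so that $\lein h_k\to0$ forces the $W$-component of $h_k$ to vanish. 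The paper gets the same splitting for free from the algebra: $I-\mul\lein$ is the canonical idempotent with range $\nul(\lein)$ and kernel $\ran(\mul)$, and the one-line identity $\mul(I-\lambda\lein)f_n=\lambda(I-\mul\lein)f_n+(\mul-\lambda)f_n$ exhibits $(\mul-\lambda)f_n$ as a $\nul(\lein)$-vector plus a term tending to zero, with no choice of complement, no surjectivity argument, and no open mapping theorem. Your route costs more machinery but is somewhat more robust, since it only uses that $\lein$ is a surjection with finite-dimensional kernel; the paper's route exploits the specific right inverse $\mul$ to make the relevant projection explicit. Two small points to tighten: the third set in the statement must be read as requiring $0\neq h\in\nul(\lein)$ (you interpret it this way implicitly, as does the paper's own proof), and both arguments use that the invariant subspace $\mcal$ is closed, which is worth stating as a standing convention.
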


\begin{theorem}\label{brzeg}
Let $\bspace$  the following
 additional condition: 
% \begin{align}\label{additional_cond} 
If $f\in \bou(\bspace)$ is analytic in an open neighborhood of a point
 $\xi\in \partial \sigma(T)$, $\lambda_n\to \xi$ and
 \begin{equation}\label{additional_cond} 
     (\mul-\lambda)h_n=zf-\lambda_n c_{\lambda_n}(f)\text{ and }  (\mul-\xi)h=zf-\xi c_\xi(f)
     \end{equation}
     Then $h_n\to \infty$.
% \end{align}
 % \begin{equation}
 %     (\mul-\lambda)^{-1}(zf-\lambda c_\lambda(f))\to  (\mul-\xi)^{-1}(zf-\xi c_\xi(f))\quad \textit{as} \lambda\to \xi 
 %     \end{equation}
 Then 
 % \begin{align*}
 %     \sigmap(\lein|_\mcal) \cap   \partial \sigma(\mul) =  \sigma(\lein|_\mcal) \cap   \partial \sigma(\mul) 
 % \end{align*} 
 \begin{align*}
\sigma(\lein|_\mcal) \cap   \partial \sigma(\mul) =
     \partial \sigma(\mul) \setminus&
 \{1/\xi \in \sigma(\mul) \colon \text{ every } f \in \mcal\text{ extends }\\& \text{ to be analytic in a neighborhood of } \xi \}
 \end{align*}
\end{theorem}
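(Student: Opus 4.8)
The plan is to prove the asserted equality by establishing the two inclusions separately. Throughout I would write a boundary point of $\sigma(\mul)$ as $\mu=1/\xi$ and set $G=\{1/\xi\in\partial\sigma(\mul)\colon\text{every }f\in\mcal\text{ extends analytically near }\xi\}$, so that the claim becomes: for $\mu\in\partial\sigma(\mul)$ one has $\mu\notin\sigma(\lein|_\mcal)$ if and only if $\mu\in G$. At the outset I would record the general fact that $\partial\sigma(S)\subseteq\sigma_{\mathrm{ap}}(S)$ for every $S\in\bou(\bspace)$, together with the observation (from Theorem \ref{iscd} and $\sigma(\mul)=\bar\varOmega$) that the inversion $\lambda\mapsto 1/\lambda$ carries the exterior of $\sigma(\mul)$ onto a one-sided neighborhood of $\mu$ lying in the resolvent set of $\lein$; this is what lets me approach $\mu$ along points $\mu_n=1/\lambda_n\in\rho(\lein|_\mcal)$ with $\lambda_n\to\xi$.

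For the inclusion $\partial\sigma(\mul)\setminus G\subseteq\sigma(\lein|_\mcal)$ (the routine direction) I would prove the contrapositive: if $\mu\notin\sigma(\lein|_\mcal)$ then $\mu\in G$. Since the resolvent set is open, $(I-\lambda\lein|_\mcal)^{-1}$ exists and is analytic in $\lambda$ on a neighborhood $U$ of $\xi=1/\mu$; as $\mcal$ is $\lein$-invariant, this subspace resolvent agrees with $R_\lambda$ on $\mcal$. Feeding $R_\lambda f$ into \eqref{kluuu} shows that $c_\lambda(f)\in\nul(\lein)$ depends analytically on $\lambda\in U$, and since every element of $\nul(\lein)$ is analytic on a neighborhood of $\bar\varOmega$, the map $\lambda\mapsto c_\lambda(f)(\lambda)=\tilde f(\lambda)$ from \eqref{przed} is analytic on $U$. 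Because $\tilde f=f$ on $\varOmega$, the function $\tilde f$ furnishes the analytic continuation of $f$ near $\xi$; as $f\in\mcal$ was arbitrary, $\mu\in G$.

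For the reverse inclusion $\sigma(\lein|_\mcal)\cap\partial\sigma(\mul)\subseteq\partial\sigma(\mul)\setminus G$ (the substantial direction) I would again argue contrapositively: assuming every $f\in\mcal$ extends analytically near $\xi$, I want $\mu\notin\sigma(\lein|_\mcal)$. Fix $\lambda_n\to\xi$ with $\mu_n=1/\lambda_n\in\rho(\lein|_\mcal)$, so that $h_n:=R_{\lambda_n}f=(I-\lambda_n\lein|_\mcal)^{-1}f$ solves the first equation in \eqref{additional_cond} while the boundary solution $h$ solves the second. The additional condition \eqref{additional_cond} is precisely the boundary-control input: applied to each $f\in\mcal$ (which extends by hypothesis) it pins down the behaviour of $\{h_n\}$, and I would use it to conclude $\sup_n\|R_{\lambda_n}f\|<\infty$ for every $f\in\mcal$. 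By the uniform boundedness principle $\sup_n\|R_{\lambda_n}|_\mcal\|<\infty$, and since $(\lein|_\mcal-\mu_n)^{-1}=-\lambda_n R_{\lambda_n}|_\mcal$, a uniformly bounded family of resolvents along $\mu_n\to\mu$ forces $\mu$ into the resolvent set of $\lein|_\mcal$; hence $\mu\notin\sigma(\lein|_\mcal)$, as desired.

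The main obstacle I anticipate is this last direction, namely converting the boundary control supplied by \eqref{additional_cond} into the uniform estimate $\sup_n\|R_{\lambda_n}|_\mcal\|<\infty$ and then into genuine invertibility of $\lein|_\mcal-\mu$ rather than mere bounded-belowness. One must verify that $\mu$ is approachable within $\rho(\lein|_\mcal)$ by the inverted points $\mu_n$, that the operators $h_n=R_{\lambda_n}f$ coincide with the subspace resolvent on $\mcal$ (using $\lein$-invariance and uniqueness of solutions of the defining equation), and that the limiting solution $h$ of the second equation in \eqref{additional_cond} genuinely represents the boundary value of $R_\lambda|_\mcal f$. The inclusion $\partial\sigma(S)\subseteq\sigma_{\mathrm{ap}}(S)$ would handle the degenerate case in which $\mu$ fails to be a boundary point of $\sigma(\lein|_\mcal)$, and Theorem \ref{iscd} would supply the surjectivity needed to upgrade any lower bound to two-sided invertibility.
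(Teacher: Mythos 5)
Your first inclusion (if $1/\xi\notin\sigma(\lein|_\mcal)$ then every $f\in\mcal$ continues analytically past $\xi$) is essentially the paper's own argument: analyticity of the subspace resolvent makes $\lambda\mapsto c_\lambda(f)$ analytic, and the diagonal evaluation $\tilde f(\lambda)=c_\lambda(f)(\lambda)$ of \eqref{przed} is the continuation. The problem is in your second direction, where your starting point is a genuine error. You assert that ``the inversion $\lambda\mapsto 1/\lambda$ carries the exterior of $\sigma(\mul)$ onto a one-sided neighborhood of $\mu$ lying in the resolvent set of $\lein$,'' and you build your approach sequence $\mu_n=1/\lambda_n$ from points $\lambda_n$ outside $\sigma(\mul)$. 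This is exactly backwards: by Lemma~\ref{nickry} and Theorem~\ref{iscd}, the set $\{1/\lambda\colon\lambda\notin\sigma(\mul)\}=\varOmega^\prime$ lies \emph{inside} $\sigma(\lein)$ --- it is the Cowen--Douglas domain, where $\ker(\lein-\omega)$ is $n$-dimensional --- and Theorem~\ref{dopel} shows such points lie in $\sigma_p(\lein|_\mcal)$ precisely when an eigenvector $(\mul-\lambda)^{-1}h$ belongs to $\mcal$, which happens in abundance (Lemma~\ref{rpps}). So there is no reason whatever that your $\mu_n$ lie in $\rho(\lein|_\mcal)$, and the uniform-boundedness argument never gets started. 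The set that inverts into the resolvent set is the \emph{interior}: the standing hypothesis on $\bspace$ is equivalent to ``$\lambda\in\varOmega\Rightarrow 1/\lambda\notin\sigma(\lein)$,'' and the paper accordingly takes $\lambda_n\in\varOmega$ tending to $\xi$ from inside. Even then one needs $1/\lambda_n\notin\sigma(\lein|_\mcal)$ for the \emph{restriction} (the paper uses this implicitly through the set $D$ and Lemma~(i)); it can be justified, e.g., because $\{1/\lambda\colon\lambda\in\varOmega\}$ is an unbounded connected subset of $\rho(\lein)$, so the resolvent there is a norm limit of polynomials in $\lein$ and hence leaves $\mcal$ invariant.

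If you repair the sourcing of the $\lambda_n$, the remainder of your mechanism is sound and is genuinely different from the paper's ending. The intended reading of \eqref{additional_cond} is that the solutions $h_n$ converge in norm to $h$ (the printed ``$h_n\to\infty$'' is evidently a typo: the paper's proof uses exactly this norm convergence to place the limit in $\mcal$); under that reading $\sup_n\|R_{\lambda_n}f\|<\infty$ for each $f\in\mcal$, the uniform boundedness principle gives $\sup_n\|R_{\lambda_n}|_\mcal\|<\infty$, and the estimate $\mathrm{dist}(\mu_n,\sigma(\lein|_\mcal))\geq\|(\lein|_\mcal-\mu_n)^{-1}\|^{-1}$ forces $1/\xi\in\rho(\lein|_\mcal)$. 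The paper instead constructs the candidate inverse at the boundary point as the norm limit $g=\lim g_n\in\mcal$ and proves its boundedness by the closed graph theorem; your route buys the advantage of never having to verify that the limiting operator actually inverts $(I-\xi\lein)|_\mcal$, at the cost of invoking the uniform boundedness principle, while the paper's closed-graph route needs in addition a uniqueness statement for the equation $(\mul-\xi)h=-\xi c$. Note finally that both routes share an unaddressed prerequisite, which you correctly flag as an obstacle: the boundary data $c_\xi(f)$ and the solution $h$ of $(\mul-\xi)h=zf-\xi c_\xi(f)$ must be shown to exist before \eqref{additional_cond} can be invoked at all.
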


Note that since $\lein$ is a (non-invertible) Cowen-Douglas operator the space  $\nul(\lein)$ is finite-dimensional. In the case where the operator $\lein$ is a backward shift the  space $\nul(\lein)$ is one-dimensional and consists of constant functions. However, in the case where $\lein$ is any Cowen-Douglas operator, the situation is more complicated. Some examples show that elements of $\nul(\lein)$ behave better than a regular function from space $\hh$,  that is, they expand beyond the $\varOmega$. For example, let $T$ be a left-invertible operator such that $T$ can be modelled as a multiplication operator on space of vector-valued holomorphic function on annulus. If $T^{\prime*}$ is a Cowen-Douglas operator and $h\in \nul(\lein)$. 
Note that, since $\sum_{n=0}^\infty (P_E {T^{\prime*n}} h) z^n = 0$, it follows from \cite{ja3} that the series for $h$ is as follows:
% Then note since $\sum_{n=0}^\infty
% (P_E{T^{\prime*n}}h)z^n=0$ then by \eqref{mod} the series for $h$ is as follows
\begin{align*}
           U_h(z) =\sum_{n=1}^\infty
(P_ET^{n}h)\frac{1}{z^n}.
\end{align*}
Therefore this series is absolutely convergent outside some disc. 

\section{Proofs}

\begin{proof}[Proof of Theorem~\ref{dopel}]
Let $\lambda\in \comp\setminus\sigma(\mul)$, and 
take $h\in \nul(\lein)$. Then $(\mul-\lambda)(\mul-\lambda)^{-1}h=h$. Multiplying both sides by $\lein$, we obtain $(I-\lambda\lein)(\mul-\lambda)^{-1}h=0$. If $(\mul-\lambda)^{-1}h\in \mcal$  then it is an eigenvector of $\lein$ corresponding to the eigenvalue $\frac{1}{\lambda}$. Hence, $\frac{1}{\lambda}\in \sigma(\lein|_
\mcal)$.

Now, suppose $\frac{1}{\lambda}\in \sigmap(\lein|_\mcal)\cap \{ \frac{1}{\lambda}\colon \lambda \notin \sigma(\mul)\}$. Then, there exists a sequence  $\{f_n\}\subseteq \mcal$ such that $\|f_n\|=1$ and
\begin{align}\label{apwid}
    \lim_{n\to \infty} \|(I -\lambda\lein)f_n\|=0.
\end{align}

Note that $(\mul\lein-I)f_n\in \nul(\lein)$ and the sequence $\{(\mul\lein-I)f_n\}$ is bounded. Since $\nul(\lein)$ is finite-dimensional, by the Bolzano-Weierstrass theorem, we can,  without loss of generality assume that $\{(\mul\lein-I)f_n\}$ is convergent to some $h\in \nul(\lein)$. Using the identity
    \begin{align*}
       \mul(I -\lambda\lein)f_n=\lambda(I-\mul\lein)f_n+(\mul-\lambda)f_n.
    \end{align*}
    and taking the limit as $n\to \infty$, it follows from \eqref{apwid} that $\lim_{n\to\infty} \|(\mul-\lambda)f_n-h\|=0$. Since $\lambda\notin \sigma(\lein)$, we obtain $\frac{1}{\lambda}\in \sigma_{p}(\lein|_\mcal)$.
\end{proof}

\begin{lemma} Let $\mcal$ be an invariant subspace of $\lein$. Then the following condition holds{\em :}
\begin{itemize}
    \item[(i)]
    If $\frac{1}{\lambda}\notin\sigma(\lein|_\mcal)$ then the inverse of $(I-\lambda\lein)|_\mcal$ has the following property{\em :} for every $f\in \mcal$ there exist $c_\lambda(f)\in \nul(\lein)$ such that
    \begin{align}\label{klu}
         (\mul-\lambda)R_\lambda f=\mul f-\lambda c_\lambda(f),
    \end{align}
    \item[(ii)] If  $\frac{1}{\lambda}\notin\sigma(\lein)$ and $f=(\mul-\lambda)g+h$, where $h\in \nul(\lein)$,  then $g=(I-\lambda\lein)^{-1}\lein f$.
    \end{itemize}
\end{lemma}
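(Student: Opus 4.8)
The plan is to reduce both parts to elementary algebraic manipulations built on the left-inverse identity $\lein\mul=I$ together with the single structural observation that $\mul\lein-I$ maps $\bspace$ into $\nul(\lein)$: indeed $\lein(\mul\lein-I)=\lein\mul\lein-\lein=\lein-\lein=0$, so $\ran(\mul\lein-I)\subseteq\nul(\lein)$. This is precisely the mechanism that produces the correction term $c_\lambda(f)\in\nul(\lein)$ appearing in \eqref{klu}.

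For part (i), first note that the hypothesis $\frac1\lambda\notin\sigma(\lein|_\mcal)$ (which in particular forces $\lambda\neq 0$) is equivalent, after multiplying by the scalar $\lambda$, to invertibility of $(I-\lambda\lein)|_\mcal$ on $\mcal$. Thus $R_\lambda\colon\mcal\to\mcal$ is well defined, and $g:=R_\lambda f\in\mcal$ satisfies $(I-\lambda\lein)g=f$, that is $g-\lambda\lein g=f$. Applying $\mul$ gives $\mul g-\lambda\mul\lein g=\mul f$, and I would then rearrange
\begin{align*}
(\mul-\lambda)g=\mul g-\lambda g=\mul f+\lambda(\mul\lein-I)g.
\end{align*}
Setting $c_\lambda(f):=(I-\mul\lein)g$ and invoking the observation above that $(\mul\lein-I)g\in\nul(\lein)$, this is exactly $(\mul-\lambda)R_\lambda f=\mul f-\lambda c_\lambda(f)$, as claimed.

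For part (ii) the argument is even shorter. Starting from the decomposition $f=(\mul-\lambda)g+h$ with $h\in\nul(\lein)$, I would apply $\lein$ to both sides and use $\lein\mul=I$ together with $\lein h=0$ to obtain
\begin{align*}
\lein f=(\lein\mul-\lambda\lein)g+\lein h=(I-\lambda\lein)g.
\end{align*}
Since $\frac1\lambda\notin\sigma(\lein)$ means $I-\lambda\lein$ is invertible on all of $\bspace$, inverting yields $g=(I-\lambda\lein)^{-1}\lein f$.

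I do not anticipate a serious obstacle here; the only points requiring care are bookkeeping ones. In (i) one must keep every quantity inside $\mcal$ so that $R_\lambda f$ is meaningful (this is why $\mcal$ is assumed $\lein$-invariant and why the restricted spectrum $\sigma(\lein|_\mcal)$, rather than $\sigma(\lein)$, is the relevant hypothesis), and one must correctly identify $\mul\lein-I$ as the map landing in $\nul(\lein)$ rather than the sign-flipped $I-\mul\lein$. In (ii) the decomposition of $f$ need not be unique a priori, but the computation shows that the $g$-component is forced to equal $(I-\lambda\lein)^{-1}\lein f$ whenever $I-\lambda\lein$ is invertible, so uniqueness of $g$ emerges from the proof itself.
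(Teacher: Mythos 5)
Your proposal is correct and follows essentially the same route as the paper: in (i) you apply $\mul$ to $(I-\lambda\lein)R_\lambda f=f$ and identify the correction term $c_\lambda(f)=(I-\mul\lein)R_\lambda f$, which lies in $\nul(\lein)$ because $\lein(I-\mul\lein)=0$; in (ii) you apply $\lein$ and invert $I-\lambda\lein$, exactly as the paper does. The only cosmetic difference is your sign bookkeeping with $\mul\lein-I$ versus the paper's $I-\mul\lein$, which changes nothing.
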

\begin{proof}
(i)
Let $\lambda\in \mathbb{C}$ be such that $\frac{1}{\lambda}\notin \sigma(\lein|_\mcal)$. Then there exists $R_\lambda\in \boldsymbol{B}(\mcal)$ such that
\begin{align*}
    (I-\lambda\lein)|_\mcal R_\lambda f=f,\quad f\in \mcal.
\end{align*}
Multiplying both sides by $\mul$ and rearranging terms yields
\begin{align*}
    % (\mul-\lambda)Rf=
    \mul f= (\mul-\lambda)R_\lambda f+\lambda(I-\mul\lein)R_\lambda f,\quad f\in \mcal.
\end{align*}
Note that $\lambda(I-\mul\lein)R_\lambda f\in \nul(\lein)$, since $\lein(I-\mul\lein)=\lein-\lein\mul\lein=0$.
Therefore, we conclude that for every for every $f\in \mcal$ there exist $c_\lambda(f)\in \nul(\lein)$ such that \eqref{klu} holds as claimed in part (i).

(ii) Suppose $f=(\mul-\lambda)g+h$, where $h\in \nul(\lein)$. Appying $\lein$ to both sides gives $\lein f=(I-\lambda\lein)g$. As $(I-\lambda\lein)$ is invertible by assumption, we can solve for $g$
to obtain $g=(I-\lambda\lein)^{-1}\lein f$.
\end{proof}

\begin{theorem}
    Let $\mul\in \bou(\bspace)$ be a left-invertible operator and $\lein\in \bou(\bspace)$ be its left-inverse. 
    Then the following conditions are equivalent:
    \begin{itemize}
        \item[(i)] 
        % operator $\mul$ has the Bezout property,
        for every $f\in \bspace$ and every $\lambda\in \varOmega$, there exist $g\in \bspace$ and $h\in \nul(\lein)$ such that
\begin{align*}
    f=(\mul-\lambda)g+h,
\end{align*}
        \item[(ii)] $\lambda\in \varOmega\implies \frac{1}{\lambda}\notin \sigma(\lein)$.
    \end{itemize}
\end{theorem}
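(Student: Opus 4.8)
The plan is to recognise that condition (i) is nothing but a surjectivity statement, that condition (ii) is the corresponding invertibility statement, and that the whole content lies in upgrading surjectivity to invertibility on $\varOmega$.

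First I would reformulate (i). Since $\lein\mul=I$, the operator $\lein$ is onto, and $\lein(\mul-\lambda)=\lein\mul-\lambda\lein=I-\lambda\lein$. I claim that, for a fixed $\lambda$, the decomposition in (i) is available for every $f$ if and only if $I-\lambda\lein$ is onto. Indeed, if $I-\lambda\lein$ is onto then, given $f\in\bspace$, I choose $g$ with $(I-\lambda\lein)g=\lein f$; then $\lein\bigl(f-(\mul-\lambda)g\bigr)=\lein f-(I-\lambda\lein)g=0$, so $h:=f-(\mul-\lambda)g\in\nul(\lein)$ gives the required splitting. Conversely, applying $\lein$ to $f=(\mul-\lambda)g+h$ yields $\lein f=(I-\lambda\lein)g$, and since $\lein$ is onto, $I-\lambda\lein$ is onto as well. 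Writing $\sigma_{\mathrm{su}}(\lein)$ for the surjectivity spectrum, this says that (i) is equivalent to $1/\lambda\notin\sigma_{\mathrm{su}}(\lein)$ for every $\lambda\in\varOmega$, while (ii) reads $1/\lambda\notin\sigma(\lein)$ for every $\lambda\in\varOmega$.

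Because $\sigma_{\mathrm{su}}(\lein)\subseteq\sigma(\lein)$, the implication (ii)$\Rightarrow$(i) is then immediate; concretely, when $1/\lambda\notin\sigma(\lein)$ one takes the canonical $g=(I-\lambda\lein)^{-1}\lein f$ of the preceding Lemma (this also covers $\lambda=0$, where $f=\mul\lein f+(I-\mul\lein)f$). For (i)$\Rightarrow$(ii) I must show that surjectivity of $I-\lambda\lein$ throughout $\varOmega$ forces invertibility there. Set $W=\{1/\lambda:\lambda\in\varOmega\}$. By (i), $W\cap\sigma_{\mathrm{su}}(\lein)=\emptyset$, and since every boundary point of the spectrum lies in the surjectivity spectrum, $\partial\sigma(\lein)\subseteq\sigma_{\mathrm{su}}(\lein)$, whence $W\cap\partial\sigma(\lein)=\emptyset$. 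The set $W$ is connected, being the image of the connected set $\varOmega$ (resp.\ $\varOmega\setminus\{0\}$) under $\lambda\mapsto1/\lambda$; moreover, because $\mul$ is left-invertible but not invertible, $0\in\sigma(\mul)=\bar\varOmega$, so $\varOmega$ contains $\lambda$ of arbitrarily small modulus and hence $W$ contains points of arbitrarily large modulus, lying in the unbounded component of $\comp\setminus\sigma(\lein)$. A connected set that avoids $\partial\sigma(\lein)$ and meets the unbounded component of $\comp\setminus\partial\sigma(\lein)$ is contained in that component, which lies in the resolvent set; thus $W\cap\sigma(\lein)=\emptyset$, i.e.\ (ii) holds.

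The main obstacle is precisely this last upgrade: condition (i) exerts no control over injectivity of $I-\lambda\lein$, since the splitting need not be unique, so a priori $1/\lambda$ could sit in the interior of $\sigma(\lein)$ as an eigenvalue while $\lein-1/\lambda$ remains onto. The backward-shift model, where $\sigma_p(\lein)$ fills the open disc but $\sigma_{\mathrm{su}}(\lein)$ is only the circle, shows this is a genuine danger. What rescues the argument is the topological input $\partial\sigma(\lein)\subseteq\sigma_{\mathrm{su}}(\lein)$, combined with the connectedness of $W$ and a base point near $\infty$ furnished by $0\in\bar\varOmega$; together these confine $W$ to the unbounded resolvent component and exclude the interior eigenvalues. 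The only points requiring care are that $\varOmega$ (hence $W$) be connected and genuinely reach arbitrarily small $\lambda$, both of which follow from $\varOmega$ being a domain with $\bar\varOmega=\sigma(\mul)\ni0$.
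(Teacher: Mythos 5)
Your proposal is correct (under the paper's standing hypotheses) and takes a genuinely different route from the paper's own proof. The paper argues purely algebraically: for fixed $\lambda\in\varOmega$ it defines $R_\lambda f=g$, where $\mul f=(\mul-\lambda)g+h$ with $h\in\nul(\lein)$, derives $(I-\lambda\lein)R_\lambda=I$, claims $R_\lambda$ is a bijection (for surjectivity it sets $f:=(I-\lambda\lein)g$, $h:=-\lambda(\mul\lein-I)g$), and then invokes the closed graph theorem. You instead reformulate (i) as the statement $W\cap\sigma_{\mathrm{su}}(\lein)=\emptyset$ for $W=\{1/\lambda:\lambda\in\varOmega\setminus\{0\}\}$, and upgrade surjectivity to invertibility topologically, via $\partial\sigma(\lein)\subseteq\sigma_{\mathrm{su}}(\lein)$, connectedness of $W$, and the fact that $W$ reaches the unbounded component of $\comp\setminus\partial\sigma(\lein)$ because $0\in\sigma(\mul)=\overline{\varOmega}$.

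The comparison is not merely stylistic: the paper's algebraic argument has a genuine lacuna that your route repairs. For $R_\lambda$ to be well defined (and linear), the decomposition of $\mul f$ must be unique; two decompositions differ exactly by an element of $\nul(I-\lambda\lein)$, so uniqueness is equivalent to injectivity of $I-\lambda\lein$ --- precisely the point at issue, since (i) controls only surjectivity. Without it, the paper's surjectivity step shows only that $g$ is \emph{a} valid witness for $f:=(I-\lambda\lein)g$, not that $R_\lambda f=g$; indeed, if $\nul(I-\lambda\lein)\neq\{0\}$, no choice of $R_\lambda$ can be surjective. Moreover, the implication (i)$\Rightarrow$(ii) is false for an arbitrary open set $\varOmega$: take $\mul$ the unilateral shift on $H^2$, $\lein$ the backward shift, and $\varOmega=\{\lambda:|\lambda|>2\}$; then (i) holds with $g=(\mul-\lambda)^{-1}f$, $h=0$, while $1/\lambda\in\overline{\disc}=\sigma(\lein)$, so (ii) fails. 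Hence some geometric input --- $\varOmega$ connected with $0\in\overline{\varOmega}=\sigma(\mul)$, available in the paper's setting since $\dim\nul(\lein)=n\geqslant1$ forces $\mul$ to be non-invertible --- is not an artifact of your method but is necessary for the theorem to hold, and your proof is the one that uses it correctly. What the paper's approach would have bought, had it worked, is a hypothesis-free purely algebraic statement; what yours buys is a sound proof at the modest cost of assumptions the paper already makes.
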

\begin{proof}
    (ii)$\Rightarrow$(i)
    Suppose that $\lambda \in \varOmega$ then $\frac{1}{\lambda} \notin \sigma(\lein)$, so the operator $I - \lambda \lein$ is invertible.
Define
\begin{align*}
     g:=(I-\lambda\lein)^{-1}\lein f\text{ and } h:=f-(\mul-\lambda)(I-\lambda\lein)^{-1}\lein f.
\end{align*}
   Since $\lein(\mul-\lambda)=(I-\lambda\lein)$ it implies that $h\in \nul(\lein)$.

    (i)$\Rightarrow$(ii) Fix $\lambda\in \varOmega$. Define operator $R_\lambda\in \bou(\bspace)$ by setting $R_\lambda f = g$, where $\mul f=(\mul-\lambda)g+h$ for some $h\in \nul(\lein)$. Therefore
    \begin{align*}
        \mul f = (\mul-\lambda)R_\lambda f +h.
    \end{align*}
 Applying  $\lein$ to both sides yields
 \begin{align*}
     f = (I-\lambda\lein)R_\lambda f.
 \end{align*}
 which shows that $R_\lambda$ is injective. To prove surjectivity, take any $g\in \bspace$ and define
 \begin{align*}
     h := \mul(I-\lambda\lein)g-(\mul-\lambda)g=-\lambda(\mul\lein-I)g.
 \end{align*}
 Then $h\in \nul(\lein)$. Putting $f := (I-\lambda\lein)g$, we get 
 \begin{align*}
     zf=(\mul-\lambda)g+h.
 \end{align*}
 Since $(I-\lambda\lein)R_\lambda=I$ and $R_\lambda$ is bijection it follows that $I-\lambda\lein$ is invertible. To prove that $R_\lambda$ is bounded, suppose that $\{f_n\}$ is a sequence in $\bspace$ such that $\lim_{n\to \infty}f_n=0$ and $\lim_{n\to \infty}R_\lambda f_n=g$ for some $g\in \bspace$. Then
 \begin{align*}
     0=\lim_{n\to \infty}f_n=\lim_{n\to \infty}(I-\lambda\lein)R_\lambda f_n=(I-\lambda\lein)g.
 \end{align*} Since $(I-\lambda\lein)$ is injective, we get that $g=0$. By the Closed Graph Theorem $R_\lambda$ is bounded.
\end{proof}

\begin{lemma}\label{nickry}
    Let $T\in \bou(\bspace)$ be a left-invertible but not invertible operator, and $L\in \bou(\bspace)$ be its left-inverse. 
    % Then if $\lambda \notin \sigma(T)$ then $\frac{1}{\lambda}\in \sigma(L)$.
    Then for every $\lambda \notin \sigma(T)$, we have $\frac{1}{\lambda} \in \sigma(L)$.
\end{lemma}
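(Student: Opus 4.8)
The plan is to reduce the assertion $\frac{1}{\lambda}\in\sigma(L)$ to the non-invertibility of the single operator $I-\lambda L$, and then to obtain a contradiction by assuming that operator is invertible. First I would record a preliminary observation that makes the statement meaningful: since $T$ is left-invertible but \emph{not} invertible, the operator $T-0=T$ fails to be invertible, so $0\in\sigma(T)$. Consequently any $\lambda\notin\sigma(T)$ is automatically nonzero, and $\frac{1}{\lambda}$ is well defined. Because $\lambda\neq 0$, the shift $L-\frac{1}{\lambda}I=-\frac{1}{\lambda}(I-\lambda L)$ is invertible if and only if $I-\lambda L$ is invertible; hence it suffices to show that $I-\lambda L$ is not invertible.

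The key algebraic step is the factorization identity $I-\lambda L=L(T-\lambda)$, which is immediate from the left-inverse relation $LT=I$, since $L(T-\lambda)=LT-\lambda L=I-\lambda L$. Now I would argue by contradiction: suppose $I-\lambda L$ were invertible. As $\lambda\notin\sigma(T)$, the factor $T-\lambda$ is invertible as well, so from the identity we get $L=(I-\lambda L)(T-\lambda)^{-1}$, a product of two invertible operators, and therefore $L$ itself would be invertible. But $LT=I$ together with invertibility of $L$ would give $T=L^{-1}$, forcing $T$ to be invertible and contradicting the hypothesis. Thus $I-\lambda L$ cannot be invertible, which by the reduction above is exactly the statement $\frac{1}{\lambda}\in\sigma(L)$.

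I expect the argument to be quite short, so there is no serious obstacle; the only genuine content is spotting the factorization $I-\lambda L=L(T-\lambda)$, which turns the spectral claim into a factorization-of-invertibles problem and lets the hypothesis ``$T$ not invertible'' do the work. The single point that warrants care is the reduction step: one must invoke $\lambda\neq 0$ (guaranteed precisely by $0\in\sigma(T)$) in order to pass freely between $L-\frac{1}{\lambda}I$ and $I-\lambda L$ without introducing a spurious singularity at the origin, and one must use that the product of invertible operators is invertible in the correct order dictated by the identity.
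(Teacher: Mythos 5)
Your proof is correct, and it takes a genuinely cleaner route than the paper's, though both pivot on the same key identity $I-\lambda L = L(T-\lambda)$. The paper works directly: writing $S=(T-\lambda)^{-1}$, it derives $(I-\lambda L)S = L$ and hence $(I-\lambda L)ST = I$, so $ST$ is a right inverse of $I-\lambda L$; it then expands $I=S(T-\lambda)$ to show $ST(I-\lambda L)\neq I$ using $TL\neq I$, so this right inverse is not a left inverse and $I-\lambda L$ cannot be invertible. You instead argue by contradiction at the level of $L$ itself: if $I-\lambda L$ were invertible, then $L=(I-\lambda L)(T-\lambda)^{-1}$ would be a product of invertibles, and $LT=I$ would then force $T=L^{-1}$ to be invertible, contradicting the hypothesis. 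Your version is shorter, and you make explicit a point the paper glosses over entirely: that $T$ not invertible gives $0\in\sigma(T)$, hence $\lambda\neq 0$, so that $\frac{1}{\lambda}$ makes sense and the passage between $L-\frac{1}{\lambda}I$ and $I-\lambda L$ is legitimate. What the paper's computation buys in exchange is extra information: it exhibits $I-\lambda L$ as right-invertible (hence surjective) but not invertible, which shows $L-\frac{1}{\lambda}I$ fails to be injective, i.e. $\frac{1}{\lambda}$ is actually an eigenvalue of $L$ rather than merely a spectral point. (Incidentally, the paper's displayed expansion contains a typo --- the last term should be $-\lambda S(I-TL)$, not $-\lambda(I-TL)$ --- and its inference from $TL\neq I$ tacitly uses the invertibility of $S$ and $\lambda\neq 0$; your argument avoids both subtleties.)
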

\begin{proof}
Suppose $\lambda \notin \sigma(T)$. Then $T - \lambda$ is invertible, so there exists $S \in \mathcal{B}(\mathcal{H})$ such that $(T-\lambda)S=I$ and $S(T-\lambda)=I$.  This implies that $(I-\lambda L)S=L(T-\lambda)S=L$.
Multiplying both sides on the left by $T$, we obtain
$(I-\lambda L)ST=I$. 
On the other hand, starting from $I = S(T - \lambda)$, we expand:
\begin{align*}
I &= S(T - \lambda TL + \lambda TL - \lambda) = ST(I - \lambda L) - \lambda(I - TL).
\end{align*}
Now, if $T$ were invertible, we would have $TL = I$, and the second term would vanish. However, since $T$ is not invertible, $TL \ne I$, so $I \ne ST(I - \lambda L)$. This combined with $(I-\lambda L)ST=I$ shows that $\frac{1}{\lambda}\in \sigma(L)$.
\end{proof}

\begin{lemma}\label{rpps}
Let 
\begin{align*}
    \mathcal{P}&=\bigvee\{\mul^kh\colon h\in \nul(\lein), k\in \zbb_+\}, \\\mathcal{S}&=\bigvee(\mul-\lambda)^{-1}h\colon \lambda\in \comp\setminus\sigma(\mul),  h\in \nul(\lein)\}.
\end{align*}
 Then $\mathcal{P}=\mathcal{S}$. Moreover, if the sequence $\{\mul^n\lein^n\}$ converges to $0$ in the strong operator topology, then $\bspace=\mathcal{P}=\mathcal{S}$.
\end{lemma}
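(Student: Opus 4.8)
The plan is to prove the two set equalities separately and then upgrade to the whole space under the strong‑convergence hypothesis. Throughout I would exploit that $\mathcal{P}$ is invariant under both $\mul$ and its left inverse $\lein$: since $\mul(\mul^k h)=\mul^{k+1}h$ and, using $\lein\mul=I$, $\lein(\mul^k h)=\mul^{k-1}h$ for $k\ge 1$ while $\lein h=0$, both operators send generators of $\mathcal{P}$ to generators or to $0$. Note also that $\mathcal{P}$ and $\mathcal{S}$ are closed and that, as $\sigma(\mul)=\bar{\varOmega}$ is compact, the resolvent set $\comp\setminus\sigma(\mul)$ contains a neighbourhood of $\infty$.

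For the inclusion $\mathcal{P}\subseteq\mathcal{S}$ I would fix $h\in\nul(\lein)$ and recover each generator $\mul^k h$ from resolvent vectors via the holomorphic functional calculus: taking a circle $\Gamma=\{|\zeta|=\rho\}$ with $\rho$ larger than the spectral radius $r(\mul)$, one has $\mul^k h=\frac{1}{2\pi i}\int_\Gamma \zeta^k(\zeta-\mul)^{-1}h\,d\zeta$. For every $\zeta\in\Gamma$ the vector $(\zeta-\mul)^{-1}h=-(\mul-\zeta)^{-1}h$ is a scalar multiple of a generator of $\mathcal{S}$, and the integral is a norm limit of Riemann sums, each a finite linear combination of such generators; since $\mathcal{S}$ is closed, $\mul^k h\in\mathcal{S}$.

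For the reverse inclusion $\mathcal{S}\subseteq\mathcal{P}$ I would show $(\mul-\lambda)^{-1}h\in\mathcal{P}$ for every $h\in\nul(\lein)$ and every $\lambda\notin\sigma(\mul)$. When $|\lambda|>r(\mul)$ the Neumann series $(\mul-\lambda)^{-1}h=-\sum_{k\ge 0}\lambda^{-k-1}\mul^k h$ converges in $\bspace$ and exhibits the vector as a norm limit of elements of $\mathcal{P}$; equivalently, the telescoping identity below writes it as $\lim_n\sum_{k=0}^{n-1}\mul^k(I-\mul\lein)\lein^k(\mul-\lambda)^{-1}h$ once the remainder $\lambda^{-n}\mul^n(\mul-\lambda)^{-1}h$ tends to $0$. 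To pass from the unbounded component of the resolvent set to all of it, I would compose the $\bspace$‑valued analytic map $\lambda\mapsto(\mul-\lambda)^{-1}h$ with the quotient map $\bspace\to\bspace/\mathcal{P}$; the resulting analytic function vanishes near $\infty$, hence on the whole unbounded component by the identity theorem. The hard part will be the bounded components (the holes of $\bar{\varOmega}$), where neither the Neumann series nor the identity theorem applies. I expect to eliminate them by proving that $\mathcal{P}$ is invariant under the full resolvent $(\mul-\lambda)^{-1}$ for every $\lambda\notin\sigma(\mul)$, equivalently that $\sigma(\mul|_{\mathcal{P}})=\sigma(\mul)$; here the extra invariance of $\mathcal{P}$ under the left inverse $\lein$ is essential. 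Because $\mul-\lambda$ is bounded below, its restriction to $\mathcal{P}$ is automatically injective with closed range, so the entire issue is surjectivity of $(\mul-\lambda)$ onto $\mathcal{P}$, that is, that no hole of $\sigma(\mul)$ is filled in when $\mul$ is compressed to $\mathcal{P}$.

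Finally, for the ``moreover'' part I would use the telescoping identity
\[
I-\mul^n\lein^n=\sum_{k=0}^{n-1}\mul^k(I-\mul\lein)\lein^k,
\]
together with the observation that $\ran(I-\mul\lein)\subseteq\nul(\lein)$, which holds since $\lein(I-\mul\lein)=\lein-\lein\mul\lein=0$. Hence for every $f\in\bspace$ each summand $\mul^k(I-\mul\lein)\lein^k f$ is of the form $\mul^k$ applied to an element of $\nul(\lein)$, so it lies in $\mathcal{P}$; therefore $(I-\mul^n\lein^n)f\in\mathcal{P}$ for every $n$. Letting $n\to\infty$ and using that $\mul^n\lein^n f\to 0$ in norm gives $f=\lim_n(I-\mul^n\lein^n)f\in\mathcal{P}$, whence $\bspace=\mathcal{P}=\mathcal{S}$.
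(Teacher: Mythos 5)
Your proof of the \emph{moreover} part is exactly the paper's argument: the telescoping identity $f=\mul^n\lein^n f+\sum_{k=0}^{n-1}\mul^k(I-\mul\lein)\lein^k f$, the observation $\lein(I-\mul\lein)=0$, and strong convergence of $\mul^n\lein^n$ to $0$. For the inclusion $\mathcal{P}\subseteq\mathcal{S}$ you take a genuinely different route: the paper argues by induction, using the identity $\mul^{n+1}(\mul-\lambda)^{-1}h=\sum_{k=0}^{n}\lambda^{k}\mul^{n-k}h+\lambda^{n+1}(\mul-\lambda)^{-1}h$ and letting $|\lambda|\to\infty$ to capture each $\mul^{n+1}h$ as a limit of elements of $\mathcal{S}$, whereas you represent $\mul^k h$ by the Cauchy integral $\frac{1}{2\pi i}\int_{|\zeta|=\rho}\zeta^k(\zeta-\mul)^{-1}h\,\D\zeta$ with $\rho$ larger than the spectral radius and approximate by Riemann sums, each of which is a finite combination of generators of $\mathcal{S}$. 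Both arguments are correct, both use only resolvent vectors with $|\lambda|$ large, and yours is shorter since it avoids the induction.

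The delicate point is the reverse inclusion $\mathcal{S}\subseteq\mathcal{P}$, and here you have correctly isolated the difficulty. The generators of $\mathcal{S}$ run over all $\lambda\in\comp\setminus\sigma(\mul)$, and since $\sigma(\mul)=\bar{\varOmega}$ may have holes (the paper itself contemplates annulus-type models), the resolvent set may be disconnected. Your Neumann series handles $|\lambda|$ large, and your quotient-map/identity-theorem argument legitimately extends this to the whole unbounded component of $\comp\setminus\sigma(\mul)$ --- a step the paper does not even take: its proof fixes $|\lambda|>\|\mul\|$ and then simply asserts $\mathcal{S}\subseteq\mathcal{P}$. But neither you nor the paper proves $(\mul-\lambda)^{-1}h\in\mathcal{P}$ for $\lambda$ in a \emph{bounded} component of the resolvent set. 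Your proposed repair (show $\sigma(\mul|_{\mathcal{P}})=\sigma(\mul)$, i.e.\ that no hole of the spectrum is filled upon restriction) is exactly what is needed, but it is left unproven and is not automatic: for an invariant subspace one only gets $\partial\sigma(\mul|_{\mathcal{P}})\subseteq\sigmap(\mul|_{\mathcal{P}})\subseteq\sigmap(\mul)$ for free, which permits $\sigma(\mul|_{\mathcal{P}})$ to swallow bounded components of $\comp\setminus\sigma(\mul)$. So this remains a genuine gap in your proposal --- one you share with, and have in fact narrowed relative to, the paper's own proof. Note finally that the gap is harmless exactly where the lemma gets used: under the hypothesis $\mul^n\lein^n\to 0$ (\textsc{sot}), your last paragraph gives $\bspace=\mathcal{P}$, so $\mathcal{S}\subseteq\bspace=\mathcal{P}$ is trivial and $\bspace=\mathcal{P}=\mathcal{S}$ follows from $\mathcal{P}\subseteq\mathcal{S}$ alone; only the unconditional claim $\mathcal{P}=\mathcal{S}$ is left open on the bounded components.
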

\begin{proof}
 (i) To show $\mathcal{S}\subset\mathcal{P}$, fix $\lambda \in \comp\setminus\sigma(\mul)$ such that $|\lambda|>\|\mul\|$ and   $h\in \nul(\lein)$. Consider the polynomial 
    \begin{align*}
       p_n=-\sum_{k=0}^n \frac{1}{\lambda^{k+1}}\mul^kh.
    \end{align*}
We estimate the norm of the difference:
    \begin{align*}
        \|p_n-(\mul-\lambda)^{-1}h\|&=\|\frac{1}{\lambda^{n+2}}\mul^{n+1}(\mul-\lambda)^{-1}h\|
        \\&\leq \frac{1}{|\lambda|^{n+2}}\|\mul\|^{n+1}\|(\mul-\lambda)^{-1}h\|
    \end{align*}
    Since $|\lambda|>\|\mul\|$, we see that $\lim_{n\to \infty}p_n=(\mul-\lambda)^{-1}h$. Therefore, $\mathcal{S}\subset\mathcal{P}$.

 Now, we show $\mathcal{P}\subset\mathcal{S}$. We proceed by induction. First, let us note that the following limit holds:
\begin{align*}
  \lim_{\lambda\to \infty} (\mul-\lambda)^{-1}h = \lim_{\lambda \to \infty}(\frac{\mul}{\lambda}-1)^{-1}h= h,\quad h\in \nul(\lein).
\end{align*}
Therefore $\nul(\lein)\subset \mathcal{S}$.  Now suppose, that for $0<k<n$ for some $n \in \natu \cup \{0\}$, then for $\lambda \in \comp\setminus\sigma(\mul)$, we have
\begin{align*}
     \mul^{n+1}{(\mul-\lambda)^{-1}}h=\sum_{k=0}^n\lambda^{k}\mul^{n-k}h+\lambda^{n+1}{(\mul-\lambda)^{-1}}h,\quad h\in \nul(\lein), 
\end{align*}
 Furthermore,
\begin{align}\label{przeo}
    \| \lambda\mul^{n+1}{(\mul-\lambda)^{-1}}h-\mul^{n+1}h\|\leq  \| \mul^{n+2}{(\mul-\lambda)^{-1}}h\|
\end{align}
 But since $\lambda\to (\mul-\lambda)^{-1}h$ is an  analytic function, then
 the right-hand side of \eqref{przeo} goes to zero as $|\lambda|\to\infty$. Thus $\mul^{n+1}h\in \mathcal{S}$ for $h\in \nul(\lein)$ and so $\mathcal{P}\subset\mathcal{S}$.

(ii) Fix $f\in \bspace$. Observe that, since $\lein(I-\mul\lein)=0$, it follows that
\begin{align*}
    h_k:=(I-\mul\lein)\lein^kf\in \nul(\lein), \quad k\in \natu.
\end{align*} It now suffices to observe that we can write
    \begin{align*}
        f= \mul^n\lein^nf+\sum_{k=0}^{n-1}\mul^k(I-\mul\lein)\lein^kf=  \mul^n\lein^nf+\sum_{k=0}^{n-1}\mul^kh_k.
    \end{align*}
\end{proof}

\begin{proof}[Proof of Theorem~\ref{iscd}]
    It follows from Theorem~\ref{dopel} that $\varOmega^\prime\subset\sigma(\lein)$.

    Let $\frac{1}{\lambda}\in \varOmega$, so there exists $R_\lambda\in \bou(\bspace)$ such that $(\mul-\lambda)R_\lambda=I$. Multyplying both sides by $\lein$, we get $(I-\lambda\lein)R_\lambda=I$. Thus $\ran(\lein-\omega)=\bspace$.
    Combining Theorem~\ref{dopel} and Lemma~\ref{rpps} we see that $\bigvee_{\omega\in\varOmega}\ker(\lein-\omega)=\bspace$. It is a general fact that conditions (i) and (ii) imply that $\dim \ker (T-\omega)$ is constant (see \cite[p.~189]{CD78}).
\end{proof}
\begin{proof}[Proof of Theorem~\ref{brzeg}]

   Let $\frac{1}{\xi}\in \partial \sigma(\mul)$ such that $1/{\xi}\notin \sigma(\lein|_\mcal)$.
    % Observe that if we substitute $z=\lambda$ where $\lambda\in\varOmega$, $\lambda\neq0$ then we get $f(\lambda)=c_\lambda(f)(\lambda)$.
Then the following function is an analytic extension of $f$ in a neighborhood
of $\xi$
\begin{align*}
\tilde{f} \colon D\ni\lambda \to c_\lambda(f)(\lambda)\in E,
\end{align*}
where $D = \{ \lambda \in \comp \colon \frac{1}{\lambda} \notin \sigma(\lein|_\mcal)\}$. 
% As before $\tilde{f}(\lambda)=f(\lambda)$, for $\lambda\in \varOmega$.

 Now suppose that for every  $f \in \mcal$  extends to be analytic in a neighborhood of  $\xi$. By the additional condition \eqref{additional_cond}, the function $g$ such that $(\mul-\xi)g=zf-\xi c_\xi(f)$
  is the norm limit of functions $g_n$ defined by 
  $(\mul-\xi)g_n=zf-\lambda_n c_{\lambda_n}(f)\in\mcal$
  for some $\lambda_n\in \varOmega$, thus $g\in \mcal$. We show that the inverse $R_\lambda$ of  $(I-\lambda\lein)|_\mcal$ is given by $R_\lambda f=g$, where $g$ is defined by the equation $ (\mul-\xi)g=zf-\xi c_\xi(f)$. We show that $R_\lambda$ is bounded.
   Suppose that $\{f_n\}$ is a sequence in $\mcal$ such that 
   $f_n \to 0$  and  $g_n\to h$,
   where $(\mul-\xi)g_n=zf_n-\xi c_\xi(f_n)$.
 % Then since $\nul(\lein)$ is finite dimensional, we can without lose of generality we can assume that
 Let $c\in\nul(\lein)$ be such that
 the sequence $\{c_\xi(f_n)\}$ converges to $c$.
 Thus
 \begin{align*}
  (\mul-\xi)h= -\xi c,
 \end{align*}
 which yields $0=(\mul-\xi)h+\xi c$. By uniqueness this 
 can only hold when $h = 0$.
 Thus by the closed graph theorem, $R_\lambda$ is a continuous operator on $\mcal$ and so $1/\xi\in \sigma(\lein|_\mcal)$.
\end{proof}
   \bibliographystyle{amsalpha}
   
   \end{document}